\documentclass[11pt]{article}

\usepackage[margin=1.05in]{geometry}
\usepackage{amsmath,amssymb,amsthm,mathtools}
\usepackage{booktabs}
\usepackage{enumitem}
\usepackage{float}
\usepackage[hidelinks]{hyperref}
\usepackage{url}
\usepackage{tikz}
\usetikzlibrary{arrows.meta,calc,positioning,fit,backgrounds}

\newtheorem{theorem}{Theorem}[section]
\newtheorem{corollary}[theorem]{Corollary}
\newtheorem{proposition}[theorem]{Proposition}

\theoremstyle{definition}
\newtheorem{definition}[theorem]{Definition}
\newtheorem{remark}[theorem]{Remark}

\DeclareMathOperator{\rank}{rank}

\DeclareMathOperator{\im}{im}
\DeclareMathOperator{\cl}{cl}

\title{Realized Rank Certificates for Matchstick Frameworks and Insertion Edges}
\author{Mike Winkler\\[1mm]
        \small Fakult\"at f\"ur Mathematik, Ruhr-Universit\"at Bochum, Germany\\ \small mike.winkler@ruhr-uni-bochum.de
        }
\date{}

\begin{document}
\maketitle

\begin{abstract}
We give an exact, checkable rank-certificate method for realized planar unit-distance frameworks.  The method is motivated by Vogel's computations for matchstick graphs and by the insertion-edge tests used in the Matchstick Graphs Calculator.  Its algebraic core is independent of geometry.  A singular square matrix $M$ is replaced by a sparse perturbation $B=M+UCV^T$.  If $B$ is nonsingular and the inverse satisfies $V^TB^{-1}U=C^{-1}$, then the columns of $B^{-1}U$ and the rows of $V^TB^{-1}$ form bases of the right and left kernels of $M$, and the rank defect of $M$ is certified.  Applied to the equilibrium matrix of a planar framework, this gives finite exact certificates for self-stresses, infinitesimal motions, redundant edges, and candidate edges whose constraints are already forced by the realized framework.  The certificate data can be checked independently from the search that produced it, using exact matrix identities.  We emphasize that matchstick frameworks are not generic: unit distances, triangles, rhombi, and symmetries can change the realized rank.  The method therefore concerns the coordinate-dependent representation of a given drawing, not only the generic rigidity matroid of the abstract graph.
\end{abstract}

\noindent\textbf{Keywords.} matchstick graph, unit-distance graph, bar-and-joint framework, rigidity matrix, equilibrium matrix, infinitesimal rigidity, self-stress, rank certificate, rigidity matroid.

\noindent\textbf{2020 Mathematics Subject Classification.} 52C25, 05C10, 05C62, 68W30.

\section{Introduction}

A matchstick graph is a plane straight-line graph whose edges all have unit length and whose edges do not cross; this terminology follows the standard usage for matchstick configurations in the plane \cite{Harborth1994}.  Such graphs are rigid enough to be arithmetically constrained, but special enough that generic rigidity alone does not describe their actual behavior; this is precisely the regime in which realized framework matrices, rather than only abstract sparsity counts, are needed \cite{AsimowRoth1978,AsimowRoth1979}.  This makes them a useful test class for rank methods in planar rigidity and for computer-assisted graph construction.

A recurring problem in computations with regular matchstick graphs is local.  A partial unit-distance framework has been assembled, and the endpoints of a remaining bar are very close to unit distance.  One has to decide whether a small motion of the existing framework can adjust the remaining distance error, or whether the distance is already forced by the bars that are present.  This is not a question about the numerical size of one residual error.  It is a rank question for the realized equilibrium matrix of the framework.

Vogel's Matchstick Graphs Calculator implements a practical method for this problem.  Its documentation relates the number of insertion edges to the mobility by the count
\[
        \text{insertion edges}=\text{mobility}+3+\frac12\sum_{v\in V}(\deg(v)-4),
\]
and therefore, for connected $4$-regular graphs, insertion edges equal mobility plus three \cite{VogelSoftware}.  Vogel's Matheplanet article describes the underlying matrix idea: add sparse correction entries to a singular matrix, invert the corrected matrix, and read linear dependencies from the inverse if the transposed correction pattern reappears \cite{Vogel2016}.  The present paper extracts the exact algebraic statement behind that procedure and formulates it in the language of realized framework rigidity.

The distinction between generic and realized rigidity is essential.  The classical planar rigidity matroid is obtained from a generic placement and is characterized by Laman's theorem \cite{Laman1970}.  Servatius gives a standard derivation of the infinitesimal rigidity matrix from the differentiated edge-length equations and explains why, at a generic placement, the linear dependencies depend only on the abstract graph \cite{Servatius1987}.  A matchstick realization is not generic.  Equal lengths, equilateral triangles, rhombi, repeated algebraic heights, and symmetries may produce additional dependencies.  Our rank certificates therefore refer to the concrete matrix of the drawing.

The main algebraic result is a sparse correction certificate.  Let $M\in K^{N\times N}$ be singular.  Let $U,V\in K^{N\times d}$ have full column rank, let $C\in K^{d\times d}$ be nonsingular, and set
\[
        B=M+UCV^T.
\]
If $B$ is nonsingular and
\[
        V^TB^{-1}U=C^{-1},
\]
then the columns of $B^{-1}U$ form a basis of $\ker M$, the rows of $V^TB^{-1}$ form a basis of the left kernel of $M$, and $\rank M=N-d$.  Thus the corrected inverse is not merely a computational aid.  It is a finite certificate of the rank defect and of both nullspaces.

For a planar framework, the right kernel of the equilibrium matrix is the space of self-stresses, while the left kernel is the space of infinitesimal motions.  This converts the sparse correction certificate into tests for mobility, redundant existing edges, and candidate insertion edges.  A candidate edge is already forced at the infinitesimal level precisely when its column lies in the realized column span of the present framework.

The contribution of the paper is threefold.  First, it gives a proof-level form of Vogel's sparse inverse test.  Second, it separates generic rigidity from realized unit-distance rigidity.  Third, it states exactly what data must be retained from a computation in order to turn a numerical or programmatic rank decision into a checkable certificate.  Thus the output of the computation is not only a rank value, but a finite proof object whose verification is independent of the search procedure.  This viewpoint is closely related to the paradigm of certifying algorithms: the computation produces not only an output, but also a certificate that can be checked independently of the algorithm which found it \cite{McConnellMehlhornNaeherSchweitzer2011}.

The paper is organized as follows.  Section \ref{sec:frameworks} recalls the rigidity and equilibrium matrices.  Section \ref{sec:generic_realized} compares generic rigidity with realized unit-distance rigidity.  Section \ref{sec:certificate} proves the sparse correction certificate.  Section \ref{sec:example} gives Vogel's basic matrix example.  Section \ref{sec:framework_application} treats rectangular framework matrices.  Section \ref{sec:edge_tests} gives redundant-edge and candidate-edge tests.  Sections \ref{sec:algorithm} and \ref{sec:numerics} discuss algorithmic certification, exact certificate files, and rounded computations.

\section{Framework matrices}\label{sec:frameworks}

Let $G=(V,E)$ be a finite graph with $V=\{1,\ldots,n\}$ and $E=\{e_1,\ldots,e_m\}$.  A planar framework is a map
\[
        p:V\to \mathbb R^2, \qquad i\mapsto p_i=(x_i,y_i).
\]
It is a matchstick framework if $\|p_i-p_j\|=1$ for every edge $ij\in E$ and if the straight-line drawing has no crossings.  The matrix definitions do not require unit lengths.  All framework applications in this paper take place over subfields of $\mathbb R$.  Over fields of characteristic $2$, the usual differential construction of the rigidity matrix would require a separate treatment.

Choose an arbitrary orientation of each edge.  For an oriented edge $e=ij$, define a column $a_e\in \mathbb R^{2n}$ by
\[
        a_e(2i-1,2i)=(x_i-x_j,y_i-y_j),
        \qquad
        a_e(2j-1,2j)=(x_j-x_i,y_j-y_i),
\]
and all other entries zero.  The equilibrium matrix is
\[
        A(G,p)=\begin{pmatrix} a_{e_1} & \cdots & a_{e_m}\end{pmatrix}
        \in \mathbb R^{2n\times m}.
\]
The usual rigidity matrix is
\[
        R(G,p)=A(G,p)^T.
\]

Figure \ref{fig:onebar} records this local contribution.  The entire method in this paper is built from such columns.

\begin{figure}[H]
\centering
\begin{tikzpicture}[>=Latex,scale=1]
\coordinate (i) at (0,0);
\coordinate (j) at (2.9,1.2);
\fill (i) circle (2pt) node[below left] {$p_i=(x_i,y_i)$};
\fill (j) circle (2pt) node[above right] {$p_j=(x_j,y_j)$};
\draw[line width=0.9pt] (i) -- (j) node[midway,above left] {$e=ij$};
\draw[->,thin,red] ($(i)+(0.10,0.10)$) -- ++(0.42,0.24);
\node[align=center] at (1.45,-1.5) {one row of $R(G,p)$\\ and one column of $A(G,p)$};
\node[anchor=west] at (4.9,0.1) {$
 a_e=
 \begin{pmatrix}
 0\\ \vdots\\
 x_i-x_j\\
 y_i-y_j\\
 \vdots\\
 x_j-x_i\\
 y_j-y_i\\
 \vdots\\ 0
 \end{pmatrix}$};
\node[anchor=west,align=left,text width=3.9cm] at (8.0,0.1) {nonzero entries occur only in the coordinate rows $i_x$, $i_y$, $j_x$, $j_y$};
\end{tikzpicture}
\caption{The local contribution of one bar.  The edge $e=ij$ contributes one row to the rigidity matrix and, after transposition, one column to the equilibrium matrix.  Vogel's certificates are formulated in terms of linear dependencies among these columns.}
\label{fig:onebar}
\end{figure}

\begin{definition}
A vector $u\in\mathbb R^{2n}$ is an infinitesimal motion of $G(p)$ if
\[
        A(G,p)^T u=0.
\]
A vector $\omega\in\mathbb R^m$ is a self-stress of $G(p)$ if
\[
        A(G,p)\omega=0.
\]
\end{definition}

The terminology reflects the standard physical interpretation of bar-and-joint frameworks \cite{AsimowRoth1978,GraverServatiusServatius1993}.  The rigidity equation says that the relative velocity of the endpoints of every bar is perpendicular to the bar.  The equilibrium equation says that scalar tensions or compressions assigned to the bars balance at every vertex.

If $G(p)$ is connected and not collinear, the trivial infinitesimal motions have dimension three \cite{Servatius1987}.  They are the two translations and the rotation.  We define the internal infinitesimal mobility by
\[
        b(G,p)=\dim\ker A(G,p)^T-3.
\]
The stress dimension is
\[
        s(G,p)=\dim\ker A(G,p).
\]
Rank-nullity gives the Maxwell identity
\begin{equation}\label{eq:maxwell_general}
        s(G,p)-b(G,p)=m-2n+3.
\end{equation}
For a connected $4$-regular graph, $m=2n$, and therefore
\begin{equation}\label{eq:four_regular_count}
        s(G,p)=b(G,p)+3.
\end{equation}
For a graph with arbitrary degrees,
\[
        m-2n=\frac12\sum_{v\in V}(\deg(v)-4),
\]
so that \eqref{eq:maxwell_general} becomes
\begin{equation}\label{eq:degree_count}
        s(G,p)=b(G,p)+3+\frac12\sum_{v\in V}(\deg(v)-4).
\end{equation}
This is the counting rule used by the calculator, written in the notation of equilibrium matrices \cite{VogelSoftware}.

\begin{remark}
Equation \eqref{eq:degree_count} is only a dimension identity.  It does not decide rank.  Consequently, it always holds for the realized matrix, but it does not say whether the realized rank equals the generic upper bound or is smaller.  Once either mobility or stress dimension is known, the other follows.  The rank-certificate theorem below supplies the missing rank information.
\end{remark}

\section{Generic rigidity versus realized unit-distance rigidity}\label{sec:generic_realized}

The classical rigidity matroid is a matroid on the edge set of an abstract graph \cite{Crapo1979,GraverServatiusServatius1993}.  It is obtained by evaluating the rigidity matrix at a generic placement, or equivalently by using Laman's sparsity condition in the plane \cite{Laman1970,Crapo1979,GraverServatiusServatius1993}.  In this setting, independence depends only on the graph and not on a particular drawing.

Servatius gives a standard formulation of this point \cite{Servatius1987}.  A plane structure gives edge-length equations.  Differentiating these equations gives a homogeneous linear system $Hw=0$ for infinitesimal motions.  The rigid motions form a three-dimensional subspace of the infinitesimal motions.  At a generic placement, the relevant linear dependencies of the rows of $H$ depend only on the underlying graph, and Laman's theorem characterizes generic rigidity in the plane.

Matchstick frameworks are usually not generic, as indicated schematically in Figure \ref{fig:generic-realized}.  They contain unit edges, equilateral triangles, rhombi, repeated algebraic heights, and often prescribed symmetries.  Thus the realized matrix $A(G,p)$ may have additional dependencies not present in a generic placement of the same abstract graph.  Conversely, a statement proved only for the generic rigidity matroid may fail to describe a particular unit-distance drawing.

Figure \ref{fig:generic-realized} emphasizes the conceptual distinction used throughout the paper.  The abstract graph is the same, but the right-hand drawing lies in a special unit-distance position with additional geometric relations.

\begin{figure}[H]
\centering
\begin{tikzpicture}[>=Latex,scale=1]
\node at (1.6,2.45) {generic placement};
\coordinate (a1) at (0,-0.30);
\coordinate (a2) at (2.1,-0.25);
\coordinate (a3) at (2.75,1.25);
\coordinate (a4) at (0.45,1.65);
\draw (a1)--(a2)--(a3)--(a4)--cycle;
\draw (a1)--(a3);
\foreach \p/\lab/\pos in {a1/1/below left,a2/2/below right,a3/3/right,a4/4/left}
  {\fill (\p) circle (2pt) node[\pos] {$\lab$};}

\node at (7.0,2.45) {unit-distance placement};
\coordinate (b1) at (5.6,0.2);
\coordinate (b2) at (7.4,0.2);
\coordinate (b3) at (8.3,1.45);
\coordinate (b4) at (6.6,1.45);
\draw (b1)--(b2)--(b3)--(b4)--cycle;
\draw (b1)--(b3);
\draw[red,dotted,line width=1pt] (6.6,1.45) -- (6.6,0.2);
\foreach \p/\lab/\pos in {b1/1/below left,b2/2/below right,b3/3/right,b4/4/left}
  {\fill (\p) circle (2pt) node[\pos] {$\lab$};}
\node[align=center] at (6.85,-0.95) {equal lengths, symmetries, and\\ special algebraic relations};
\end{tikzpicture}
\caption{Generic placements and unit-distance placements encode different rank questions.  In a generic framework the rigidity properties are governed by the abstract graph alone, whereas a unit-distance realization may carry additional algebraic relations coming from equal lengths, equilateral triangles, rhombi, or symmetries.}
\label{fig:generic-realized}
\end{figure}

For this reason we use the following terminology.

\begin{definition}
Let $A=A(G,p)$ be the equilibrium matrix of a realized framework.  The realized edge matroid $\mathcal M_A$ is the representable matroid on the set $E$ in which a subset $F\subseteq E$ is independent if and only if the columns $\{a_e:e\in F\}$ are linearly independent.
\end{definition}

Its rank function is
\[
        r_A(F)=\rank A_F,
\]
where $A_F$ is the submatrix consisting of the columns indexed by $F$.  Its closure operator is
\[
        \cl_A(F)=\{e\in E: r_A(F\cup\{e\})=r_A(F)\}.
\]
For a candidate edge $c$ not in $E$, we use the same notation by adjoining its candidate column $a_c$ to the representation.

In a generic placement, $\mathcal M_A$ is the usual planar rigidity matroid restricted to $E$ \cite{Laman1970,Crapo1979,GraverServatiusServatius1993}.  In a matchstick placement, $\mathcal M_A$ is the realized specialization which must be used in exact computations.  Vogel's method is a method for computing this realized matroidal information, not merely the generic Laman rank \cite{Vogel2016,VogelSoftware}.

\begin{remark}
This distinction also explains why numerical drawings can be misleading.  Rounding a special unit-distance realization may destroy exact linear dependencies.  The rounded drawing then represents a different realized matroid.
\end{remark}

\section{A sparse rank-certificate theorem}\label{sec:certificate}

The following theorem is the algebraic core.  It is stated over an arbitrary field $K$.  This is useful because exact computations may take place over $\mathbb Q$, over an algebraic number field, or over a field generated by construction parameters.

\begin{theorem}[Sparse correction certificate]\label{thm:sparse_certificate}
Let $M\in K^{N\times N}$.  Let $U,V\in K^{N\times d}$ have full column rank, and let $C\in K^{d\times d}$ be nonsingular.  Put
\[
        B=M+UCV^T.
\]
Assume that $B$ is nonsingular and that
\begin{equation}\label{eq:inverse_block_condition}
        V^T B^{-1}U=C^{-1}.
\end{equation}
Then
\[
        X=B^{-1}U,
        \qquad
        Y^T=V^T B^{-1}
\]
satisfy
\[
        MX=0,
        \qquad
        Y^TM=0.
\]
Moreover, $\rank M=N-d$.  Hence the columns of $X$ form a basis of $\ker M$, and the rows of $Y^T$ form a basis of the left kernel of $M$.
\end{theorem}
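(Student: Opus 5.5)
The plan is to compute $MX$ and $Y^TM$ directly from $M=B-UCV^T$ and then use the block condition \eqref{eq:inverse_block_condition}. For the right kernel,
\[
MX=(B-UCV^T)B^{-1}U=U-UC\,(V^TB^{-1}U)=U-UCC^{-1}=0 .
\]
Symmetrically,
\[
Y^TM=V^TB^{-1}(B-UCV^T)=V^T-(V^TB^{-1}U)CV^T=V^T-V^T=0 .
\]
These two lines establish $MX=0$ and $Y^TM=0$ with no further hypotheses.

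Next I will show $\rank X=\rank Y^T=d$. Since $B^{-1}$ is invertible and $U$ has full column rank $d$, the product $X=B^{-1}U$ has rank $d$. Likewise $Y^T=V^TB^{-1}$ has rank $d$ because $V$ has full column rank. The identity $MX=0$ then gives $\dim\ker M\ge d$, that is, $\rank M\le N-d$.

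The main step is the reverse inequality $\rank M\ge N-d$. Here I use subadditivity of rank: $B=M+UCV^T$ and $\rank(UCV^T)\le d$, so
\[
N=\rank B\le \rank M+d .
\]
Hence $\rank M\ge N-d$, and combined with the previous step $\rank M=N-d$. Consequently $\dim\ker M=d$, and the $d$ linearly independent columns of $X$, which lie in $\ker M$, form a basis of it. The same dimension count gives a left kernel of dimension $N-\rank M=d$, and the $d$ independent rows of $Y^T$ form a basis of it.

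No step is a genuine obstacle. The only point needing care is that the upper bound on the nullity comes from nonsingularity of $B$ together with the rank-$d$ perturbation, and not from the block condition, which is used only to prove $MX=0$ and $Y^TM=0$. Everything is valid over an arbitrary field $K$, since only rank-nullity and matrix identities are used.
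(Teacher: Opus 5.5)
Your proof is correct. The computation of $MX=0$ and $Y^TM=0$ is the same as the paper's, but you prove $\rank M=N-d$ by a different route.

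The paper factors $M=B(I-P)$ with $P=XCV^T$. It uses $V^TX=C^{-1}$ to show $P^2=P$, obtains $\rank P=d$ from $PX=X$ and the independence of the columns of $X$, and then concludes $\rank M=\rank(I-P)=N-d$ from the rank identity for complementary idempotents.

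You instead bound the rank from both sides. The $d$ independent null vectors in $X$ give $\rank M\le N-d$. Subadditivity, $N=\rank B\le\rank M+\rank(UCV^T)\le \rank M+d$, gives $\rank M\ge N-d$.

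Your version is more elementary and makes the division of labour explicit. The bound $\dim\ker M\le d$ holds for any correction of rank at most $d$ whose sum with $M$ is nonsingular. The block condition is needed only to supply $d$ independent null vectors.

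The paper's factorization packages more structure into one fact. Since $B$ is invertible, it also exhibits $\ker M=\ker(I-P)=\im P=\im X$ directly, so the kernel appears as the image of an explicit projector. In that argument the block condition is what makes $P$ a projector.

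Both arguments are valid over an arbitrary field $K$.
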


\begin{proof}
Set $X=B^{-1}U$.  Since $B=M+UCV^T$, one has
\[
        M=B-UCV^T.
\]
Therefore
\[
        MX=(B-UCV^T)B^{-1}U
          =U-UC(V^TB^{-1}U)
          =U-UCC^{-1}=0.
\]
Similarly, with $Y^T=V^TB^{-1}$,
\[
        Y^TM=V^TB^{-1}(B-UCV^T)
            =V^T-(V^TB^{-1}U)CV^T
            =V^T-C^{-1}CV^T=0.
\]
It remains to prove the rank statement.  Write
\[
        M=B(I-XCV^T).
\]
The matrix $P=XCV^T$ is an idempotent, since
\[
        P^2=XCV^TXCV^T=XC(V^TX)CV^T=XCC^{-1}CV^T=XCV^T=P.
\]
Moreover, $\rank P=d$.  Indeed, $P=X(CV^T)$ gives $\rank P\le d$, while
\[
        PX=XCV^TX=XCC^{-1}=X
\]
and the columns of $X=B^{-1}U$ are independent.  Hence $\rank P\ge d$.  Thus $P$ is an idempotent of rank $d$, and $I-P$ has rank $N-d$.  Since $B$ is nonsingular, $\rank M=N-d$.  The nullspaces of $M$ and $M^T$ have dimension $d$.  The columns of $X$ and the rows of $Y^T$ are independent.  They are therefore bases.
\end{proof}

\begin{remark}
The full-column-rank hypothesis on $U$ and $V$ is not independent of
\eqref{eq:inverse_block_condition}.  If that identity holds, then
$V^TB^{-1}U=C^{-1}$ is nonsingular, and therefore
\[
        d=\rank(V^TB^{-1}U)\le \min(\rank U,\rank V),
\]
since $B$ is nonsingular.  Hence $\rank U=\rank V=d$.  We keep the
hypothesis in the statement because it makes the certificate easier to
read and because it is known before the inverse-block identity is checked
in most implementations.
\end{remark}

The most common case uses coordinate selection matrices
\[
        U=\begin{pmatrix}e_{i_1}&\cdots&e_{i_d}\end{pmatrix},
        \qquad
        V=\begin{pmatrix}e_{j_1}&\cdots&e_{j_d}\end{pmatrix}.
\]
If $C=I_d$, the correction consists of $d$ added ones in the positions $(i_t,j_t)$.  Condition \eqref{eq:inverse_block_condition} says that the transposed inverse block
\[
        \bigl((B^{-1})_{j_s i_t}\bigr)_{s,t=1}^d
\]
is the identity matrix.  Thus each marked entry reappears as a one in the transposed position of the inverse, and all cross entries vanish.  This is the exact form of the correction test described in Vogel's article \cite{Vogel2016}.

\begin{corollary}[One borrowed entry]\label{cor:one_entry}
Let $M\in K^{N\times N}$.  Suppose that
\[
        B=M+e_i e_j^T
\]
is nonsingular and that
\[
        (B^{-1})_{ji}=1.
\]
Then $\rank M=N-1$.  The vector $B^{-1}e_i$ spans $\ker M$, and the row $e_j^T B^{-1}$ spans the left kernel of $M$.
\end{corollary}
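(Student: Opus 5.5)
This is the special case $d=1$, $U=e_i$, $V=e_j$, $C=(1)$ of Theorem \ref{thm:sparse_certificate}, and I would prove it simply by checking that each hypothesis of that theorem holds in this case.

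The vectors $e_i$ and $e_j$ are nonzero, so the $N\times 1$ matrices $U=e_i$ and $V=e_j$ have full column rank $1$. The $1\times 1$ matrix $C=(1)$ is nonsingular. With these choices $UCV^T=e_ie_j^T$, so $B=M+UCV^T$ is exactly the matrix in the corollary, and it is nonsingular by hypothesis.

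The only identity to check is the inverse-block condition \eqref{eq:inverse_block_condition}. For $d=1$ it reads
\[
V^TB^{-1}U=e_j^TB^{-1}e_i=(B^{-1})_{ji}=1=C^{-1},
\]
and this is precisely the assumption $(B^{-1})_{ji}=1$. So here the condition is just the statement that the marked entry reappears in the transposed position of the inverse.

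Theorem \ref{thm:sparse_certificate} then gives $\rank M=N-1$. It also gives that the single column of $X=B^{-1}e_i$ is a basis of $\ker M$, and that the single row $Y^T=e_j^TB^{-1}$ is a basis of the left kernel of $M$. For one-dimensional spaces, being a basis is the same as spanning, which is the claimed statement.

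The argument is pure specialization and has no real obstacle. The one point worth noting is that $B^{-1}e_i$ and $e_j^TB^{-1}$ are nonzero because $B$ is invertible. This nonvanishing is already part of the theorem's conclusion, and it is the fact that makes them spanning vectors rather than merely null vectors.
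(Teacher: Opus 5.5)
Your proposal is correct and matches the paper's approach: the corollary is exactly the $d=1$ specialization $U=e_i$, $V=e_j$, $C=I_1$ of Theorem \ref{thm:sparse_certificate}. In that case the inverse-block identity $V^TB^{-1}U=C^{-1}$ reduces to $(B^{-1})_{ji}=1$, which is the hypothesis of the corollary.
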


\begin{remark}
The theorem separates search from proof.  Pivoting, exchanges, and Woodbury updates may be used to find a useful correction.  The proof requires only the final matrix identity \eqref{eq:inverse_block_condition}, checked exactly.
\end{remark}

Figure \ref{fig:sparse-certificate} shows the schematic pattern of the certificate.  A sparse correction $P=UCV^T$ is added to a singular matrix.  The inverse of the corrected matrix displays the transposed pattern which certifies the rank defect and reveals bases of the right and left kernels.

\begin{figure}[H]
\centering
\begin{tikzpicture}[x=0.55cm,y=0.55cm,>=Latex]
\node at (1.25,5.95) {$M$};
\foreach \r in {0,...,5} {
  \draw (0,\r) -- (2.5,\r);
  \draw (0.5*\r,0) -- (0.5*\r,5);
}
\node at (3.35,2.5) {$+$};
\node at (5.15,5.95) {$P$};
\foreach \r in {0,...,5} {
  \draw (4.0,\r) -- (6.5,\r);
  \draw ({4.0+0.5*\r},0) -- ({4.0+0.5*\r},5);
}
\fill[red!28] (5.5,3) rectangle (6.0,4);
\fill[red!28] (4.5,1) rectangle (5.0,2);
\node[red!70!black] at (5.75,3.5) {$1$};
\node[red!70!black] at (4.75,1.5) {$1$};
\node at (7.2,2.5) {$=$};
\node at (9.1,5.95) {$B=M+P$};
\foreach \r in {0,...,5} {
  \draw (7.8,\r) -- (10.3,\r);
  \draw ({7.8+0.5*\r},0) -- ({7.8+0.5*\r},5);
}
\fill[red!28] (9.3,3) rectangle (9.8,4);
\fill[red!28] (8.3,1) rectangle (8.8,2);
\node[red!70!black] at (9.55,3.5) {$1$};
\node[red!70!black] at (8.55,1.5) {$1$};
\draw[->,thick] (10.8,2.5) -- (11.9,2.5);
\node at (14.9,5.95) {$B^{-1}$};
\foreach \r in {0,...,5} {
  \draw (12.3,\r) -- (16.8,\r);
  \draw ({12.3+0.9*\r},0) -- ({12.3+0.9*\r},5);
}
\fill[blue!22] (12.3,4) rectangle (13.2,5);
\fill[blue!22] (15.0,1) rectangle (15.9,2);
\node[blue!70!black] at (12.75,4.5) {$1$};
\node[blue!70!black] at (15.45,1.5) {$1$};
\node[align=center,text width=10.4cm] at (8.4,-1.15) {selected columns of $B^{-1}$ span $\ker M$,\\ selected rows of $B^{-1}$ span the left kernel of $M$};
\end{tikzpicture}
\caption{A sparse correction certificate.  A singular matrix $M$ is made invertible by adding a sparse correction $P$.  If the inverse of $M+P$ contains the corresponding transpose pattern, then the selected columns and rows of the inverse certify the rank defect of $M$.}
\label{fig:sparse-certificate}
\end{figure}

\section{The basic matrix example}\label{sec:example}

We include Vogel's introductory computation as a model \cite{Vogel2016}.  Let
\[
A=\begin{pmatrix}
7&1&13&-11\\
18&12&6&18\\
21&15&4&26\\
9&6&3&9
\end{pmatrix}.
\]
The matrix is singular.  Add one in position $(2,3)$ and set
\[
B=A+e_2e_3^T
 =\begin{pmatrix}
7&1&13&-11\\
18&12&7&18\\
21&15&4&26\\
9&6&3&9
\end{pmatrix}.
\]
Then
\[
B^{-1}=\begin{pmatrix}
7&0&25&-191/3\\
-15&-2&-54&425/3\\
0&1&0&-2\\
3&1&11&-30
\end{pmatrix}.
\]
The transposed entry is $(B^{-1})_{3,2}=1$.  Corollary \ref{cor:one_entry} gives
\[
        x=B^{-1}e_2=\begin{pmatrix}0\\-2\\1\\1\end{pmatrix},
        \qquad
        y^T=e_3^TB^{-1}=\begin{pmatrix}0&1&0&-2\end{pmatrix}.
\]
Indeed,
\[
        Ax=0,
        \qquad
        y^TA=0.
\]
Thus the corrected inverse contains both the column dependence and the row dependence of the original matrix.  Explicitly,
\[
        -2\begin{pmatrix}1\\12\\15\\6\end{pmatrix}
        +\begin{pmatrix}13\\6\\4\\3\end{pmatrix}
        +\begin{pmatrix}-11\\18\\26\\9\end{pmatrix}=0
\]
and
\[
        \begin{pmatrix}18&12&6&18\end{pmatrix}
        -2\begin{pmatrix}9&6&3&9\end{pmatrix}=0.
\]
The example also illustrates why the position of the added entry matters.  The added one is not arbitrary data in the final proof.  It is a device which makes the matrix invertible and whose transposed inverse entry certifies a rank defect of the uncorrected matrix.

\section{Application to rectangular framework matrices}\label{sec:framework_application}

The equilibrium matrix of a framework is usually rectangular.  The sparse correction theorem applies after a square completion.  The completion is only a bookkeeping device.  Artificial kernel directions introduced by zero rows or zero columns must be removed before interpreting the result geometrically.  The completions below are chosen because they preserve one of the two genuine kernels while making the matrix square.

\begin{remark}
For the $4$-regular graphs that motivate this paper no completion is
needed.  A connected $4$-regular graph has $m=2n$, so the equilibrium
matrix $A(G,p)\in\mathbb R^{2n\times 2n}$ is already square, and
Theorem \ref{thm:sparse_certificate} applies to $A$ directly: $\ker A$
is the space of self-stresses and $\ker A^T$ is the space of
infinitesimal motions.  This is the setting of Vogel's procedure, in
which a square matrix is corrected and inverted.  The square completions
below are required only for the general case in which the equilibrium
matrix is rectangular.
\end{remark}

Let $A\in K^{r\times m}$ be an exact equilibrium matrix.  If $r\ge m$, set
\[
        \widehat A=\begin{pmatrix} A&0\end{pmatrix}\in K^{r\times r}.
\]
The added zero columns create artificial right-kernel directions.  They are not self-stresses of the framework.  The left kernel is unchanged.  If $m\ge r$, set
\[
        \widehat A=\begin{pmatrix} A\\0\end{pmatrix}\in K^{m\times m}.
\]
The added zero rows create artificial left-kernel directions.  They are not framework motions.  The right kernel is unchanged.

\begin{proposition}[Extraction from a square completion]\label{prop:completion}
Let $A\in K^{r\times m}$ and let $\widehat A$ be one of the square completions above.  Suppose that Theorem \ref{thm:sparse_certificate} is applied to $\widehat A$.  The resulting null vectors form complete bases of $\ker\widehat A$ and $\ker\widehat A^T$.  After projecting away the components supported only in the added zero rows or columns, these bases span the genuine framework kernels $\ker A$ and $\ker A^T$; after zero vectors and redundancies are discarded, they yield bases of these kernels.  The dimensions of the two genuine kernels are always
\[
        \dim\ker A=m-\rank A,
        \qquad
        \dim\ker A^T=r-\rank A.
\]
\end{proposition}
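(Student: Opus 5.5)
Theorem \ref{thm:sparse_certificate}, applied to $\widehat A$, yields complete bases of $\ker\widehat A$ and $\ker\widehat A^T$. What remains is to identify how these kernels relate to $\ker A$ and $\ker A^T$ in each of the two completions, and to verify that projection sends a basis onto a spanning set. I treat the two cases separately; they are transposes of one another.

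\emph{Case $r\ge m$}, $\widehat A=\begin{pmatrix}A&0\end{pmatrix}$. For the left kernel I note that $y^T\widehat A=\begin{pmatrix}y^TA&0\end{pmatrix}$, so $\ker\widehat A^T=\ker A^T$ exactly. The rows of $Y^T$ are therefore already a basis of $\ker A^T$, and no projection is needed. For the right kernel I write $z=(w,t)$ with $w\in K^m$ and $t\in K^{r-m}$. Then $\widehat Az=Aw$, so
\[
\ker\widehat A=\ker A\oplus K^{r-m}.
\]
Let $\pi$ be the coordinate projection $z\mapsto w$. It is linear and maps $\ker\widehat A$ onto $\ker A$: surjectivity holds because every $w\in\ker A$ is $\pi(w,0)$, and the image lies in $\ker A$ because $Aw=\widehat Az=0$. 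A linear surjection carries a spanning set to a spanning set, so the projected columns of $X$ span $\ker A$. Some of them may become zero or dependent; in particular, the components supported only on the added coordinates project to $0$. Discarding zero vectors and extracting a maximal independent subset gives a basis of $\ker A$.

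\emph{Case $m\ge r$}, $\widehat A=\begin{pmatrix}A\\0\end{pmatrix}$. This is the transpose argument. Here $\widehat A z=\begin{pmatrix}Az\\0\end{pmatrix}$, so $\ker\widehat A=\ker A$ with no change. On the left side, writing $y=(u,t)$ gives $y^T\widehat A=u^TA$, so
\[
\ker\widehat A^T=\ker A^T\oplus K^{m-r}.
\]
Projecting $y\mapsto u$ is a surjection onto $\ker A^T$, and the same extraction of a maximal independent subset gives a basis.

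The dimension formulas are just rank--nullity for $A$ and $A^T$, using $\rank A^T=\rank A$. For consistency I also check them against the theorem. In the first case it gives $\dim\ker\widehat A=d=r-\rank\widehat A$. Since $\rank\widehat A=\rank A$, this equals $(m-\rank A)+(r-m)$, matching the direct-sum decomposition above; the second case is analogous.

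The argument is routine, and the only point needing care is the phrase ``projecting away'' components. It must be read as the coordinate projection $\pi$ onto the original coordinates, not as discarding whole vectors. Under that reading, the spanning claim follows purely from surjectivity of $\pi$ restricted to $\ker\widehat A$. This holds because the kernel splits as a direct sum along the coordinate decomposition, which is exactly what the zero block in $\widehat A$ guarantees.
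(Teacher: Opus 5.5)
Your proof is correct and follows the paper's argument. You identify $\ker\widehat A$ and $\ker\widehat A^T$ as $\ker A\oplus K^{r-m}$ and $\ker A^T$, or the transposed pair in the other case, and observe that the coordinate projection sends the certified bases onto spanning sets of the genuine kernels; your explicit surjectivity argument and the check $r-\rank A=(m-\rank A)+(r-m)$ just spell out what the paper states briefly.
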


\begin{proof}
For $\widehat A=(A\;0)$ the equation $\widehat A z=0$ reads $Az_0=0$, where $z_0$ is the part of $z$ in the original $m$ columns.  The remaining coordinates are artificial.  The equation $w^T\widehat A=0$ is exactly $w^TA=0$ on the original rows.  Thus the right kernel of $\widehat A$ is $\ker A$ plus the artificial zero-column directions, while the left kernel is $\ker A^T$.  The case $\widehat A=\begin{pmatrix}A\\0\end{pmatrix}$ is analogous, with artificial directions in the left kernel instead.  The dimension formulas are rank-nullity and do not depend on the certificate.  Since Theorem \ref{thm:sparse_certificate} gives complete bases of the two kernels of $\widehat A$, the projected families span the corresponding genuine kernels of $A$; selecting independent nonzero vectors gives bases.
\end{proof}

For a non-collinear connected planar framework $G(p)$, Proposition \ref{prop:completion} yields
\[
        s(G,p)=\dim\ker A(G,p),
        \qquad
        b(G,p)=\dim\ker A(G,p)^T-3.
\]
Together with \eqref{eq:degree_count}, this gives the relation between mobility and insertion-edge count used in the computational output.

\begin{remark}
The term insertion edge is not standard in rigidity theory; we use it here in the computational sense of Vogel's software documentation \cite{VogelSoftware}.  In the present formulation it is best understood in two related senses.  The number of insertion edges is the stress dimension predicted by \eqref{eq:degree_count}.  The displayed insertion-edge ranges are supports or closures in the realized edge matroid $\mathcal M_A$.  In Vogel's tables the symbol $A$ counts correction entries in a square completion.  In the $4$-regular setting motivating this paper, after the artificial directions from a completion have been discarded, this count agrees with the stress dimension $s=b+3$.  For arbitrary auxiliary examples it need not coincide with $s$, since a square completion may also contain entries belonging only to artificial rows or columns.
\end{remark}

\section{Redundant edges, closure, and candidate insertion tests}\label{sec:edge_tests}

Figure \ref{fig:candidate-edge} illustrates the closure point of view.  The drawn framework is chosen so that its columns already have full realized rank for its vertex set.  Hence the dashed candidate bar does not create a new infinitesimal constraint, even though it is absent from the current drawing.

\begin{figure}[H]
\centering
\begin{tikzpicture}[>=Latex,scale=1]
\coordinate (p1) at (0.5,0);
\coordinate (p2) at (2.7,0);
\coordinate (p3) at (2.7,1.5);
\coordinate (p4) at (0.5,1.5);
\coordinate (p5) at (1.6,2.45);
\fill[gray!10] (p1)--(p2)--(p3)--(p5)--(p4)--cycle;
\draw[line width=0.9pt] (p1)--(p2)--(p3)--(p4)--cycle;
\draw[line width=0.9pt] (p4)--(p5)--(p3);
\draw[line width=0.9pt] (p1)--(p3);
\draw[red,dashed,line width=1pt] (p1)--(p5) node[pos=0.55,left] {$c$};
\foreach \p/\lab/\pos in {p1/1/below left,p2/2/below right,p3/3/right,p4/4/left,p5/5/above}
  {\fill (\p) circle (2pt) node[\pos] {$\lab$};}
\draw[->,thick] (3.55,1.0) -- (5.05,1.0);
\node[anchor=west,align=left,text width=5.8cm] at (5.65,1.2) {$a_c\in\cl_A(E)$\\[4pt] $\rank(A,a_c)=\rank A$\\[4pt] the candidate edge is already forced\\ by the realized rank};
\end{tikzpicture}
\caption{An insertion edge as a realized closure test.  The dashed candidate edge is redundant precisely when its column lies in the span of the columns of the existing framework.  Equivalently, adding it does not increase the realized rank.}
\label{fig:candidate-edge}
\end{figure}

The next proposition gives the exact meaning of an edge which can be removed without changing the infinitesimal motion space.

\begin{proposition}[Existing redundant edges]\label{prop:redundant_existing}
Let $A=A(G,p)$ be the equilibrium matrix, and let $e\in E$.  Let $A_{-e}$ be obtained from $A$ by deleting the column $a_e$.  The following are equivalent.
\begin{enumerate}[label=\textup{(\roman*)}]
\item There exists a self-stress $\omega\in\ker A$ with $\omega_e\ne0$.
\item The column $a_e$ lies in the span of the other columns of $A$.
\item $\rank A_{-e}=\rank A$.
\item $\ker A_{-e}^T=\ker A^T$.
\item $e\in\cl_A(E\setminus\{e\})$.
\end{enumerate}
Thus deleting $e$ does not change the infinitesimal motion space if and only if $e$ occurs with nonzero coefficient in a self-stress.
\end{proposition}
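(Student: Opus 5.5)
The plan is to prove the equivalences by a chain through (ii), since each condition is a standard linear-algebra reformulation of ``$a_e$ is a linear combination of the other columns.'' I will show (i)$\Leftrightarrow$(ii), (ii)$\Leftrightarrow$(iii), (iii)$\Leftrightarrow$(iv), and (iii)$\Leftrightarrow$(v). All arguments take place over the field of the framework and use only the column-space description of $A$.

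For (i)$\Leftrightarrow$(ii), write $A\omega=\sum_{f\in E}\omega_f a_f$. If $A\omega=0$ with $\omega_e\ne0$, dividing by $\omega_e$ gives
\[
a_e=-\omega_e^{-1}\sum_{f\ne e}\omega_f a_f,
\]
which is (ii). Conversely, a relation $a_e=\sum_{f\ne e}\lambda_f a_f$ gives the self-stress $\omega$ with $\omega_e=-1$ and $\omega_f=\lambda_f$ for $f\ne e$.

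For (ii)$\Leftrightarrow$(iii), the column space of $A$ is $\im A_{-e}+\operatorname{span}(a_e)$. This equals $\im A_{-e}$ exactly when $a_e\in\im A_{-e}$. Since $\im A_{-e}\subseteq\im A$, equality of the subspaces is equivalent to equality of their dimensions, that is, of the ranks.

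For (iii)$\Leftrightarrow$(iv), note that $\ker A^T$ and $\ker A_{-e}^T$ are the orthogonal complements of the two column spaces under the standard bilinear pairing on $K^{2n}$. Concretely, $u\in\ker A^T$ if and only if $u^Ta_f=0$ for all $f\in E$. Dropping one condition gives $\ker A^T\subseteq\ker A_{-e}^T$. By rank--nullity the dimensions are $2n-\rank A$ and $2n-\rank A_{-e}$, so the inclusion is an equality if and only if the ranks agree. Finally, (v) is (iii) verbatim: $r_A\bigl((E\setminus\{e\})\cup\{e\}\bigr)=r_A(E\setminus\{e\})$ unwinds to $\rank A=\rank A_{-e}$ by the definition of $\cl_A$.

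The closing sentence is (iv)$\Leftrightarrow$(i), since $\ker A^T$ is by definition the infinitesimal motion space. I expect no real obstacle. The only point needing care is (iii)$\Leftrightarrow$(iv), where one must use the inclusion together with the dimension count, rather than an orthogonal-complement argument that would require an inner product over a general field $K$. Rank--nullity avoids that issue.
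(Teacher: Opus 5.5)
Your proof is correct and follows essentially the same route as the paper: (i)$\Leftrightarrow$(ii) uses the same linear relation, (ii), (iii) and (v) follow from the definitions of rank and closure, and (iv) rests on $\ker A^T$ being the annihilator of the column span. The only variation is that you get (iii)$\Leftrightarrow$(iv) from the inclusion $\ker A^T\subseteq\ker A_{-e}^T$ plus rank--nullity, whereas the paper says that equal column spans are equivalent to equal orthogonal complements; your stated worry about that route is unfounded, since the standard bilinear form is nondegenerate over any field, so $\dim W^\perp = 2n-\dim W$ and $(W^\perp)^\perp=W$ hold without positivity.
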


\begin{proof}
If $\omega\in\ker A$ and $\omega_e\ne0$, then
\[
        \omega_e a_e=-\sum_{f\ne e}\omega_f a_f,
\]
so $a_e$ is in the span of the other columns.  The converse is the same equation read backwards.  This proves the equivalence of (i) and (ii).  The equivalence of (ii), (iii), and (v) is the definition of rank and closure in the realized matroid.  Finally, two matrices with the same row number have equal column span if and only if the orthogonal complements of their column spans are equal.  These orthogonal complements are $\ker A^T$ and $\ker A_{-e}^T$.
\end{proof}

Candidate edges can be treated in the same representation.  Let $H=(V,E)$ be a realized framework and let $c=ij$ be a pair of vertices not necessarily in $E$.  Define the candidate column $a_c$ from the coordinates of $i$ and $j$ exactly as for an edge.  It is the transpose of the differential of the squared-distance constraint for $c$.

\begin{proposition}[Candidate edge test]\label{prop:candidate_edge}
Let $A=A(H,p)$.  For a candidate edge $c$, the following are equivalent.
\begin{enumerate}[label=\textup{(\roman*)}]
\item $a_c\in\im A$.
\item $c\in\cl_A(E)$ after adjoining $a_c$ to the realized representation.
\item Every infinitesimal motion $u\in\ker A^T$ satisfies $a_c^T u=0$.
\item Adding $c$ does not decrease the dimension of the infinitesimal motion space.
\item The enlarged equilibrium matrix $(A\;a_c)$ has one more independent self-stress than $A$.
\end{enumerate}
\end{proposition}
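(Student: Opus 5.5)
The plan is to show that every condition is equivalent to (i), which says that $a_c$ lies in the column space of $A$. Throughout, $A\in\mathbb R^{2n\times m}$, and the enlarged matrix $A'=(A\;a_c)$ has $2n$ rows and $m+1$ columns. The single quantity controlling everything is the rank increment $\delta=\rank A'-\rank A\in\{0,1\}$. Condition (i) holds exactly when $\delta=0$, because appending a column raises the rank if and only if that column lies outside $\im A$. So the proof reduces to expressing each of (ii)--(v) in terms of $\delta$.

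\textbf{(i)$\Leftrightarrow$(ii).} Adjoin $a_c$ to the representation of the realized matroid $\mathcal M_A$. By the definition of $\cl_A$, $c\in\cl_A(E)$ means $r_A(E\cup\{c\})=r_A(E)$, which is $\rank A'=\rank A$, that is, $\delta=0$.

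\textbf{(i)$\Leftrightarrow$(iii).} Over a subfield of $\mathbb R$ we have $\im A=(\ker A^T)^{\perp}$, the orthogonal-complement fact already used in the proof of Proposition \ref{prop:redundant_existing}. Hence $a_c\in\im A$ if and only if $a_c^Tu=0$ for all $u\in\ker A^T$.

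\textbf{(i)$\Leftrightarrow$(iv) and (i)$\Leftrightarrow$(v).} Apply rank--nullity to both matrices:
\[
\dim\ker A'^T=2n-\rank A-\delta,
\qquad
\dim\ker A'=(m+1)-\rank A-\delta=\dim\ker A+1-\delta .
\]
The first identity shows that adding $c$ lowers the motion dimension by exactly $\delta$. So the motion dimension is unchanged (equivalently, not decreased) precisely when $\delta=0$. Alternatively, $\ker A'^T=\ker A^T\cap a_c^{\perp}$, which ties (iv) directly to (iii). The second identity shows that the stress space grows by exactly one precisely when $\delta=0$. Concretely, if $a_c=Aw$, then $(-w,1)$ is the new self-stress, which has nonzero $c$-coordinate, in the same way as in Proposition \ref{prop:redundant_existing}.

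\textbf{Main obstacle: the wording of (iv).} Adding a column can never increase the motion dimension, because $\ker A'^T\subseteq\ker A^T$. Read literally, ``does not decrease'' therefore means ``stays equal''. I would state this explicitly, by proving the inclusion $\ker A'^T\subseteq\ker A^T$ first, so the equivalence is not vacuous. All other steps are routine linear algebra.
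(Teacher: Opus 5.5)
Your proof is correct and follows essentially the same route as the paper: (ii) by the definition of closure, (iii) by $(\im A)^\perp=\ker A^T$, and (iv), (v) from the fact that $\rank(A\;a_c)=\rank A$ exactly when $a_c\in\im A$, combined with rank--nullity. Your rank increment $\delta$ and the explicit inclusion $\ker(A\;a_c)^T\subseteq\ker A^T$ only make explicit two points the paper leaves tacit: the converse directions, and the reading of ``does not decrease'' in (iv) as ``stays equal''.
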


\begin{proof}
The equivalence of (i) and (ii) is the definition of closure.  The equivalence of (i) and (iii) is the identity
\[
        (\im A)^\perp=\ker A^T.
\]
If $a_c\in\im A$, then adding $a_c$ does not change the column span and hence does not change the left kernel.  This proves (iv).  Conversely, if the left kernel does not change, then the orthogonal complement of the column span does not change, so the column span does not change.  This gives (i).  The rank of $(A\;a_c)$ is the same as the rank of $A$ precisely in this case.  Since one column has been added, the right nullity increases by one.
\end{proof}

\begin{corollary}[Linear obstruction to adjustment]\label{cor:linear_obstruction}
If $a_c\in\im A(H,p)$, then the length of the candidate edge $c$ is stationary to first order along every infinitesimal motion of $H(p)$.  If, in addition, $H(p)$ is infinitesimally rigid, then the candidate distance is locally fixed up to Euclidean motion.  This last implication uses the standard fact that infinitesimal rigidity implies local rigidity for planar bar-and-joint frameworks \cite{AsimowRoth1978,AsimowRoth1979}.
\end{corollary}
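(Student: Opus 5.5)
The plan is to derive the first-order claim directly from Proposition~\ref{prop:candidate_edge}, and then to obtain the second claim by combining it with the Asimow--Roth theorem. First I will record what the column $a_c$ means. For $c=ij$, the squared-distance function is $f_c(p)=\|p_i-p_j\|^2$. Its differential at $p$ in direction $u\in\mathbb R^{2n}$ is
\[
df_c(p)[u]=2(p_i-p_j)\cdot(u_i-u_j).
\]
By the definition of $a_e$ this equals $2\,a_c^Tu$. So $a_c$ is, up to the factor $2$, the gradient of $f_c$, as the paper already notes just before Proposition~\ref{prop:candidate_edge}.

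For the first claim I assume $a_c\in\im A(H,p)$. The equivalence (i)$\Leftrightarrow$(iii) of Proposition~\ref{prop:candidate_edge} then gives $a_c^Tu=0$ for every $u\in\ker A^T$, that is, for every infinitesimal motion. Hence $df_c(p)[u]=0$. Since $\|p_i-p_j\|\neq 0$, the length $\sqrt{f_c}$ has derivative $df_c/(2\|p_i-p_j\|)$, which also vanishes, so the length is stationary to first order. I would add a one-line remark that this step needs only $a_c\in\im A$; an explicit witness is a vector $w$ with $Aw=a_c$, equivalently a self-stress of $(A\;a_c)$ with nonzero last coordinate.

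For the second claim I assume in addition that $H(p)$ is infinitesimally rigid, meaning $\dim\ker A^T=3$, so the only infinitesimal motions are the trivial ones. I then invoke the Asimow--Roth theorem~\cite{AsimowRoth1978,AsimowRoth1979}: infinitesimal rigidity implies local rigidity. Concretely, there is a neighborhood $\mathcal U$ of $p$ such that every $q\in\mathcal U$ with $\|q_k-q_l\|=\|p_k-p_l\|$ for all $kl\in E$ is congruent to $p$. Congruence preserves all pairwise distances, including $\|q_i-q_j\|$, so the candidate distance is constant on the set of nearby equivalent realizations. This is exactly the statement that the candidate distance is locally fixed up to Euclidean motion.

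The main obstacle is interpretive rather than computational. With infinitesimal rigidity the hypothesis $a_c\in\im A$ is automatic, because $\ker A^T$ consists of trivial motions, which preserve all distances. So the second sentence of the corollary really rests on the cited theorem, and the first-order stationarity is not what drives it. I would state this plainly. I would also stress that, without rigidity, first-order stationarity does not imply local constancy; a second-order flex could still change the length. This is why the corollary speaks only of a \emph{linear} obstruction, and why the non-collinearity and connectedness conventions behind $b(G,p)$ are needed for the identification of the three trivial motions.
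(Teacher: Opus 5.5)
Your proof is correct and follows the paper's own argument, which the paper leaves implicit. The equivalence (i)$\Leftrightarrow$(iii) of Proposition~\ref{prop:candidate_edge} gives $a_c^Tu=0$ for every $u\in\ker A^T$, which is exactly the vanishing of the first derivative of $\|p_i-p_j\|^2$, and the second claim is just the cited Asimow--Roth theorem (infinitesimal rigidity implies local rigidity) together with the fact that congruences preserve all pairwise distances; your side remark that $a_c\in\im A$ holds automatically under infinitesimal rigidity, because the trivial motions annihilate every $a_c$, is also correct.
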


\begin{remark}
Corollary \ref{cor:linear_obstruction} is deliberately infinitesimal.  The certificate does not replace nonlinear arguments.  For frameworks which are not infinitesimally rigid, a candidate edge may be stationary to first order but still vary along a finite flex in higher order.  Thus the certificate identifies constraints already settled at the linear level; remaining questions require higher-order analysis or finite-flex constructions.  A finite non-insertability statement requires infinitesimal rigidity of the relevant subframework or an additional nonlinear argument, in line with the standard distinction between infinitesimal and finite rigidity \cite{AsimowRoth1978,AsimowRoth1979}.
\end{remark}

\begin{remark}
Corollary \ref{cor:linear_obstruction} certifies that the length of a
candidate edge $c=ij$ is fixed to first order by the realized framework.
Under infinitesimal rigidity this becomes a local metric constraint.  It
does not certify that this length equals the length of any other vertex
pair.  Equality of two separately fixed distances is a metric coincidence
forced by symmetry or by the algebraic construction relations, not a
property of the realized edge matroid $\mathcal M_A$.  Thus a certificate
that $|p_i-p_j|$ is stationary does not by itself justify replacing it by
a value $|p_l-p_m|$ taken elsewhere in the drawing; such an identification
requires a separate exact relation $|p_i-p_j|=|p_l-p_m|$.
\end{remark}

\section{Algorithmic certification}\label{sec:algorithm}

The following form is suitable for exact computation, for a proof appendix, and for supplementary certificate data.  Its purpose is to separate the search for a rank decision from the verification of the final proof object.

\begin{enumerate}[label=\textup{\arabic*.}]
\item Construct the equilibrium matrix $A(G,p)$ over an exact field.  For a matchstick framework this may require algebraic coordinates or exact construction relations.
\item Replace $A$ by a square completion $\widehat A$ if necessary.
\item Find sparse matrices $U,V$ and an invertible small matrix $C$ such that
\[
        B=\widehat A+UCV^T
\]
is nonsingular.
\item Verify exactly that
\[
        V^TB^{-1}U=C^{-1}.
\]
This identity is the rank certificate.
\item Extract
\[
        X=B^{-1}U,
        \qquad
        Y^T=V^TB^{-1}.
\]
The columns of $X$ are right null vectors of $\widehat A$, and the rows of $Y^T$ are left null vectors.
\item Remove artificial components introduced by square completion.  Interpret the remaining right null vectors as self-stresses and the remaining left null vectors as infinitesimal motions.
\item Verify the final products
\[
        AX_0=0,
        \qquad
        Y_0^T A=0
\]
for the projected vectors $X_0,Y_0$.  These products are redundant if Step 4 is exact, but they should nevertheless be included in the certificate file.  They catch transcription, parsing, and indexing errors before the certificate is accepted.
\item For each marked edge set, record the support of the corresponding self-stress.  Proposition \ref{prop:redundant_existing} identifies redundant existing edges, and Proposition \ref{prop:candidate_edge} identifies candidate columns already forced by the framework.
\end{enumerate}

In an implementation, one need not recompute an inverse from scratch when the correction changes; this is also how the certificate should be separated from the search procedure in software implementations \cite{Vogel2016,VogelSoftware}.  The Woodbury identity is used between nonsingular corrected iterates, not by inverting the singular target matrix.  If $B_0$ is nonsingular, $B_0^{-1}$ is known, and
\[
        B_1=B_0+U'C'V'^T,
\]
then
\[
        B_1^{-1}=B_0^{-1}
        -B_0^{-1}U'(C'^{-1}+V'^TB_0^{-1}U')^{-1}V'^TB_0^{-1},
\]
whenever the displayed inverse exists.  This identity is an acceleration device for moving from one invertible corrected matrix to another.  It is not a substitute for the final certificate.  The purpose of the certificate is verifiability rather than asymptotic optimality; implementation-specific speedups should not replace the final exact identities.

\section{Exact arithmetic and rounded drawings}\label{sec:numerics}

The main numerical danger is a false rank decision.  Matchstick coordinates frequently contain algebraic numbers, for instance multiples of the height of an equilateral triangle.  If coordinate differences are rounded independently, identities such as
\[
        x_j-x_i=(x_j-x_k)+(x_k-x_i)
\]
may be destroyed.  Since the columns of $A(G,p)$ are built from such differences, this can destroy exact self-stresses and exact infinitesimal motions.

There are three computational regimes which should be kept separate.

\begin{enumerate}[label=\textup{(\alph*)}]
\item \emph{Exact certification.}  The matrix entries lie in an exact field and the inverse-block identity is checked exactly.  This is a proof.
\item \emph{Exact computation from rounded input.}  The rounded coordinates are treated as exact rational data.  This proves a statement about the rounded framework, not necessarily about the intended framework.
\item \emph{Tolerance-based recovery.}  During elimination a number $x$ with $|x|<\varepsilon$ is treated as zero.  This may recover the dependency structure intended by the construction, but it is not a proof until followed by exact reconstruction or an exact certificate.
\end{enumerate}

Vogel's documentation records these distinctions in computational form \cite{VogelSoftware}.  It lists exact GAP computations, exact browser computations with integer arithmetic, rounded computations, and an adjustable tolerance used when a pivot is close to zero.  This is a sound practical strategy if the output intended for publication is converted into an exact certificate.

For a proof-oriented data file, and in particular for supplementary certificate material, the following items are sufficient:
\[
        A,
        \quad U,V,C,
        \quad B^{-1}U,
        \quad V^TB^{-1},
        \quad V^TB^{-1}U=C^{-1}.
\]
Together with the definitions of the coordinate field and the projected check products $AX_0=0$ and $Y_0^TA=0$, these data certify the rank, the self-stress space, and the infinitesimal motion space.  A verifier need not reproduce the search, only the exact matrix identities and the coordinate relations used to construct $A$.

\section{Consequences for matchstick-graph constructions}

The rank-certificate method is especially useful in searches for minimal regular matchstick graphs, including the type of $4$-regular unit-distance constructions considered in \cite{WinklerDinkelackerVogel2019}.  Such searches often produce nearly completed frameworks with a small number of non-fitting edges.  The method answers two distinct questions.

First, it determines whether a subframework has internal infinitesimal mobility.  If the internal mobility is zero, then an exact non-collinear framework is locally rigid.  This gives a local obstruction to adjusting distances by a small motion.

Second, it locates self-stress supports and matroid closures in the realized representation.  If a candidate edge lies in the realized closure of the existing edge columns, then its constraint is already a linear consequence of the remaining constraints.  Conversely, if its column is independent of the existing columns, adding it removes one infinitesimal degree of freedom.  This is the linear algebra behind the insertion-edge display in the calculator.

The method is not a replacement for the nonlinear part of a construction.  It does not by itself prove that an approximate drawing can be moved to an exact unit-distance drawing.  It supplies the rank information needed to decide which local adjustments are possible and which edge constraints are already forced.  In this form it is a suitable tool for rigorous computational papers, because the final certificate consists of finite exact matrix equalities.

The scope of these conclusions is infinitesimal.  The certificate proves linear dependence in the realized rigidity representation.  It does not, without further hypotheses, prove existence or nonexistence of a finite flex.  In applications this distinction is useful rather than restrictive: the certificate identifies exactly which local constraints require nonlinear treatment and which ones are already settled at first order.

\section{Conclusion}

Vogel's computational procedure can be stated as an exact rank certificate for the realized equilibrium matrix of a matchstick framework.  The sparse correction is not an ad hoc alteration of the problem once the inverse-block identity is verified.  It proves that the original matrix has a prescribed rank defect and that the corrected inverse contains bases for both nullspaces.

The surrounding rigidity theory clarifies what the certificate proves.  In a generic placement it computes information in the usual planar rigidity matroid.  In a matchstick placement it computes information in the realized edge matroid of the given unit-distance drawing.  This realized setting is the one needed for small regular matchstick graphs and for local insertion-edge questions.

Applied to the equilibrium matrix, the right nullspace is the space of self-stresses and the left nullspace is the space of infinitesimal motions.  The method therefore provides a unified certificate for mobility, stress-supported edge dependencies, and local candidate-edge tests.  Its proper use requires a clear distinction between exact certification, exact computation on rounded input, and tolerance-based numerical recovery.  With this distinction made explicit, the method gives a rigorous foundation for further computational and theoretical work on regular matchstick graphs and for certifiable computations with realized planar frameworks.

\section*{Acknowledgments}

The author thanks Stefan Vogel for developing and documenting the computational method which motivated this article, and for making the corresponding software available.  Vogel's Matheplanet article and the Matchstick Graphs Calculator documentation are cited above as the primary sources for the underlying computational idea.  The terminology and presentation in the present paper were adapted to the standard framework-rigidity setting in order to make the resulting certificates independent of a particular implementation.

\end{document}